\documentclass[11pt]{article}

\usepackage[charter]{mathdesign}  \usepackage{fontspec}
  \usepackage[margin=1in]{geometry}
\usepackage[square,comma,sort&compress]{natbib}
\usepackage{graphicx}
\usepackage{booktabs}
\usepackage{tabularx}
\usepackage{array}
\usepackage{amsmath}
\usepackage{amsthm}
\usepackage{mathtools}
\usepackage{enumitem}
\usepackage{xcolor}
\usepackage{xurl}
\usepackage{aliascnt}
\usepackage{hyperref}
\usepackage[capitalize,nameinlink,noabbrev]{cleveref}
\usepackage[disable]{todonotes}

\definecolor{darkblue}{rgb}{0.0,0.0,0.45}
\hypersetup{
  colorlinks=true,
  linkcolor=darkblue,
  citecolor=darkblue,
  urlcolor=darkblue
}

\theoremstyle{plain}
\newtheorem{theorem}{Theorem}[section]
\newaliascnt{lemma}{theorem}
\newtheorem{lemma}[lemma]{Lemma}
\aliascntresetthe{lemma}
\newaliascnt{proposition}{theorem}
\newtheorem{proposition}[proposition]{Proposition}
\aliascntresetthe{proposition}
\newaliascnt{corollary}{theorem}

\aliascntresetthe{corollary}
\theoremstyle{definition}
\newaliascnt{definition}{theorem}
\newtheorem{definition}[definition]{Definition}
\aliascntresetthe{definition}
\newaliascnt{question}{theorem}
\newtheorem{question}[question]{Question}
\aliascntresetthe{question}
\newaliascnt{remark}{theorem}
\newtheorem{remark}[remark]{Remark}
\aliascntresetthe{remark}

\crefname{theorem}{Theorem}{Theorems}
\Crefname{theorem}{Theorem}{Theorems}
\crefname{lemma}{Lemma}{Lemmas}
\Crefname{lemma}{Lemma}{Lemmas}
\crefname{proposition}{Proposition}{Propositions}
\Crefname{proposition}{Proposition}{Propositions}
\crefname{corollary}{Corollary}{Corollaries}
\Crefname{corollary}{Corollary}{Corollaries}
\crefname{definition}{Definition}{Definitions}
\Crefname{definition}{Definition}{Definitions}
\crefname{question}{Question}{Questions}
\Crefname{question}{Question}{Questions}
\crefname{remark}{Remark}{Remarks}
\Crefname{remark}{Remark}{Remarks}

\DeclareMathOperator{\conv}{conv}
\DeclareMathOperator{\rank}{rank}
\newcommand{\R}{\mathbb{R}}
\newcommand{\one}{\mathbf{1}}
\newcommand{\bits}{\{0,1\}}

\newcommand{\unitvec}{\mathbf{e}}

\title{A Counterexample to Ziegler's Cross-Polytope Conjecture \\ for Simplicial \texorpdfstring{$0/1$}{0/1}-Polytopes}

\author{Volker Kaibel\\
Faculty of Mathematics, Otto-von-Guericke-Universit\"at Magdeburg, Germany\\
\texttt{kaibel@ovgu.de}
\and
Sebastian Pokutta\\
Institute of Mathematics, Technische Universit\"at Berlin and\\
Zuse Institute Berlin, Germany\\
\texttt{pokutta@zib.de}}

\date{June 30, 2026}

\begin{document}
\maketitle

\begin{abstract}
Ziegler proved that every simplicial $d$-dimensional $0/1$-polytope has at most
$2d$ vertices, and asked whether equality forces the polytope to be centrally
symmetric and hence, equivalently, a $0/1$-realization of the $d$-dimensional cross
polytope. In this note, we give a negative answer, exhibiting an explicit set of $14$
vertices in $\bits^7$ whose convex hull is a simplicial $7$-polytope and is not centrally symmetric.  
Moreover, via exhaustive enumeration we show that
up to the
symmetries of the cube, there are precisely five such polytopes in
dimension~$7$ (of two combinatorial types) 
that are not centrally symmetric.
\end{abstract}

\section{Introduction}
\label{sec:introduction}

A $0/1$-polytope is the convex hull of a subset of the vertices of the unit
cube.  These polytopes form a central class in polyhedral combinatorics and they
include many polytopes arising from discrete optimization. Several central questions revolve around the class of $0/1$-polytopes. In this work, we are interested in simplicial $d$-dimensional $0/1$-polytopes, i.e., those ones  where every at most $(d-1)$-dimensional face is a simplex. 
Such polytopes can only have a linear number 
\[
  f_0(P)\le 2d
\]
 of vertices (i.e.,  $0$-dimensional faces), see Proposition~\ref{prop:ziegler-bound}. 
\citet{Ziegler1999} asked whether equality fully determines the structure of the polytope:

\begin{question}[\citet{Ziegler1999}]
\label{ques:ziegler}
Is every simplicial $d$-dimensional $0/1$-polytope with $2d$ vertices affinely isomorphic to the cross
polytope $\conv\{\pm\unitvec_1,\dots,\pm\unitvec_d\}$ (with $\unitvec_i=(0,\dots,0,1,0,\dots,0)$?  Equivalently (see Lemma~\ref{lem:central-cross}), is every such equality case centrally symmetric?
\end{question}

Note that central symmetry of a 0/1-polytope means that its vertex set consists of $d$ opposite pairs, i.e., pairs of the form $(v, \one - v)$ with $v$ a $0/1$-point. In his lecture notes, \citet{Ziegler1999} writes that Aichholzer had enumerated all $6$-dimensional
$0/1$-polytopes with at most $12$ vertices and \Cref{ques:ziegler} is true for $d\le6$; see also the systematic study of extremal $0/1$-polytopes in
\citet{KortenkampRichterGebertSarangarajanZiegler1997,Aichholzer2000}. As a byproduct of our classification later in \Cref{sec:enumeration}, we will also verify the affirmative answer to the question for $d \leq 6$. 

\paragraph{Contribution.}
We provide an explicit example establishing  the negative answer to \Cref{ques:ziegler}, given as the convex hull of the $14$ binary vectors listed in \Cref{thm:main}.  The
polytope is $7$-dimensional and every facet is
a $6$-simplex, yet four vertices $v$ do not have their cube antipode $\one - v$ present. Thus central symmetry fails even though the hypotheses of \Cref{ques:ziegler} are satisfied. Given that the conjecture is true for $d \leq 6$, this is the first possible dimension for such a counterexample. Additionally, we performed an exhaustive search reported in \Cref{sec:enumeration} providing a full characterization of such polytopes in dimension $7$: up to the symmetry group of the cube there are exactly five examples, in two combinatorial types, that are not centrally symmetric.

The constructions have been verified by means of two
independent implementations. The first enumerates (candidate) facet defining hyperplanes and checks all linear-algebra assertions over the rational numbers.  The second uses
\texttt{polymake}~\citep{polymake} and its \texttt{SIMPLICIAL} predicate, again over rational arithmetic, to compute
the dimension, vertices, facets, incidences, $f$-vector, $h$-vector, and graph.
The two computations agree.

\paragraph{Related work.}
The systematic investigation of  $0/1$-polytopes was initiated by
\citet{KortenkampRichterGebertSarangarajanZiegler1997} and further developed in
Ziegler's survey~\citep{Ziegler1999}.  Subsequently, the knowledge about that class of polytopes has been increased considerably. For instance, simple (i.e., non-degenerate) 0/1-polytopes have been characterized by \citet{KaibelWolff2000}, bounds for the numbers of facets have been derived by 
\citet{FleinerKaibelRote2000, BaranyPor2001}, and structural insights into random 0/1-polytopes, in particular their graphs, have been obtained, e.g. by \citet{Ferber_etal2026}, \citet{Babecki_etal2025}, and \citet{Kaibel2004}.

\section{The conjecture and the equality case}
\label{sec:preliminaries}

Let $V\subseteq\bits^d$ and let $P = \conv(V)\subseteq\R^d$.  We denote the all-one vector by $\one=(1,\ldots,1)$ and call $v\mapsto\one-v$ the \emph{cube antipodal map}, which assigns to a $0/1$-vertex $v$ its antipode $\one - v$.  The set
$V$ is \emph{centrally symmetric with respect to the cube center} if
for every $v\in V$ we have $\one-v\in V$. A $d$-dimensional polytope $P$ is \emph{simplicial} if every facet of $P$ is a simplex of dimension $d-1$. The following elementary vertex bound motivates \Cref{ques:ziegler}. 

\begin{proposition}[Vertex bound; remark after Proposition 17 in \citet{Ziegler1999}]
\label{prop:ziegler-bound}
If $P\subseteq[0,1]^d$ is a simplicial $d$-polytope, then $f_0(P)\le 2d$.
Moreover, if equality holds, then for every coordinate $i$ both coordinate faces
$P\cap\{x_i=0\}$ and $P\cap\{x_i=1\}$ contain exactly $d$ vertices and are
$(d-1)$-simplices.
\end{proposition}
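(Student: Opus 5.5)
The plan is a direct counting argument over one coordinate direction. Throughout, $P=\conv(V)$ with $V\subseteq\bits^d$ as in the setup of this section, so all vertices of $P$ are $0/1$-points. (For arbitrary polytopes inside $[0,1]^d$ the bound fails: a simplicial polytope with many vertices can sit strictly inside the cube, missing every coordinate hyperplane. So the $0/1$ hypothesis is what the argument will use.)

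Fix a coordinate $i$. Since $0\le x_i\le 1$ is valid for $P$, both $F_i^0=P\cap\{x_i=0\}$ and $F_i^1=P\cap\{x_i=1\}$ are faces of $P$, possibly empty. Because $P$ is $d$-dimensional, neither hyperplane contains $P$, so both are proper faces. In a simplicial polytope every proper face is a face of some facet, and the facet is a $(d-1)$-simplex. Hence every proper face is a simplex of dimension at most $d-1$, and so has at most $d$ vertices.

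Next, every vertex $v\in V$ has $v_i\in\{0,1\}$, so it lies in exactly one of $F_i^0$ and $F_i^1$. The vertices of a face $F$ of $P$ are exactly the vertices of $P$ lying in $F$. Therefore
\[
  f_0(P)=f_0(F_i^0)+f_0(F_i^1)\le d+d=2d .
\]

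For the equality statement, suppose $f_0(P)=2d$. Since each summand is at most $d$, both must equal $d$, and this holds for every coordinate $i$. Each of $F_i^0$ and $F_i^1$ is then a simplex with $d$ vertices, so it has dimension exactly $d-1$.

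I do not expect a real obstacle. The only points needing care are that the coordinate faces are proper, which uses full-dimensionality, and the tacit $0/1$ assumption on the vertices, which should be stated explicitly.
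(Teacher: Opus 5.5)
Your proof is correct and matches the paper's argument: the two coordinate hyperplanes cut out proper faces that are simplices with at most $d$ vertices each, these faces partition the vertex set, and equality forces both bounds to be tight for every $i$. You are also right that the $0/1$ assumption on the vertices is needed, since the statement with only $P\subseteq[0,1]^d$ fails (e.g.\ for polygons with many vertices inside the square); the paper uses this assumption tacitly when it asserts that every vertex has $i$th coordinate $0$ or $1$.
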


\begin{proof}
For a fixed coordinate $i$, the hyperplanes $x_i=0$ and $x_i=1$ support the cube
and hence cut out faces of $P$.  Since $P$ is simplicial, both faces are
simplices of dimension at most $d-1$, so each has at most $d$ vertices.  Every
vertex of $P$ has $i$th coordinate either $0$ or $1$, so the two faces partition
$V$ and $f_0(P)\le d+d=2d$.  If equality holds, both inequalities must be
tight for every coordinate.
\end{proof}

A useful consequence of \Cref{prop:ziegler-bound} that we exploited in both the search for the counterexample and the full characterization in \Cref{sec:enumeration} is that the vertex set is \emph{balanced} in every coordinate, i.e., in each coordinate bnoth values $0$ and $1$ occur exactly $d$ times, so for the barycenter  
$\frac1{2d}\sum_{v\in V}v = \frac12\one$ holds. As the barycenter
of the vertex set of a full-dimensional polytope is a proper convex
combination of all of its vertices, it follows that $\frac12\one\in\operatorname{int}P$. In particular every cube-antipodal
pair $\{v,\one-v\}$ has the interior point $\frac12\one$ as its midpoint, so that it cannot be an edge of the polytope. While this observation is not needed to state the
counterexample, it explains the numerical invariants below.

\section{The counterexample}
\label{sec:counterexample}

We will now provide our counterexample.

\begin{theorem}[Main theorem]
\label{thm:main}
Let $P=\conv(V)\subseteq[0,1]^7$, where
\[
\begin{aligned}
V=\{\,&0010110,\ 1011101,\ 1000100,\ 1001010,\ 0111000,\ 1100001,\ 0010001,\\
     &0001100,\ 0100010,\ 1001111,\ 1101110,\ 0110101,\ 1110011,\ 0111011\,\}.
\end{aligned}
\]
Then $P$ is a simplicial $7$-dimensional $0/1$-polytope with $14=2\cdot7$
vertices.  It is not centrally symmetric.  Consequently, not every simplicial
$d$-dimensional $0/1$-polytope with $2d$ vertices is a cross polytope.
\end{theorem}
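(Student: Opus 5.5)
The proof is a finite verification, which I would organise so that each step can be checked by hand or by exact rational computation.

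\textbf{Easy parts.} First I check that the fourteen strings are distinct $0/1$-vectors. Then I verify that each coordinate takes the value $1$ exactly seven times. This is necessary for the equality case of \Cref{prop:ziegler-bound}, and it gives the barycenter $\frac12\one$.

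Non-central-symmetry is immediate. For example, $0010110\in V$ but its antipode $1101001\notin V$. Likewise $1000100$ has antipode $0111011\in V$, while $0010001$ has antipode $1101110\in V$. Scanning the list exhibits the four vertices without antipodes.

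To show that $P$ is $7$-dimensional, I choose eight of the points and check that their differences from one of them form a $7\times 7$ matrix of nonzero determinant. Since each coordinate facet $P\cap\{x_i=0\}$ must be a $6$-simplex, a convenient choice is to take the seven vertices with $x_i=0$ for some fixed $i$. I check that these are affinely independent spanning a $6$-flat, and then add any vertex with $x_i=1$.

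Next, every point of $V$ is a vertex of $P$. Each point of $\{0,1\}^7$ is a vertex of the cube, so no point of $V$ is a convex combination of the others. Hence $f_0(P)=14$.

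\textbf{Main step: simpliciality.} This is the obstacle. I would use the standard criterion: $P$ is simplicial iff every supporting hyperplane of a facet contains exactly $7$ points of $V$. Equivalently, no $8$ points of $V$ lie on a common hyperplane $H$ with all of $V$ on one side.

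To verify this, I would enumerate all $\binom{14}{7}=3432$ seven-subsets $S\subseteq V$. For each $S$:
\begin{enumerate}[label=(\roman*)]
\item compute whether $S$ is affinely independent;
\item if so, compute the unique hyperplane $a^\top x=b$ through $S$, with integer coefficients;
\item evaluate $a^\top v-b$ on the remaining seven points.
\end{enumerate}
The set $S$ spans a facet exactly when all these values are nonzero and of the same sign.

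The claim then reduces to three exact checks:
\begin{enumerate}[label=(\alph*)]
\item every facet-supporting hyperplane found contains exactly $7$ points of $V$, i.e.\ no one-sided hyperplane passes through $8$ or more points;
\item among the affinely dependent $7$-subsets, none lies in a one-sided hyperplane together with further points;
\item the facets found form a closed pseudomanifold covering the boundary.
\end{enumerate}
For (c), it suffices that every $5$-subset of a facet, i.e.\ every ridge, lies in exactly two facets. Combined with full-dimensionality, this certifies the facet list.

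The $14$ coordinate facets $x_i=0$ and $x_i=1$ give a first sanity check: each must appear as a $7$-subset with a one-sided hyperplane. As a consistency check, the $h$-vector should satisfy the Dehn--Sommerville relations.

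\textbf{Where the difficulty lies.} The conceptual content is light; the difficulty is the volume of exact rational linear algebra needed to rule out degenerate facets. I would therefore present the facet list with integer normal vectors as an explicit certificate. Checking each listed facet, its sign pattern, and the ridge condition is then a bounded computation that a reader can reproduce.

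Finally, because $P$ is simplicial with $2\cdot 7$ vertices but is not centrally symmetric, \Cref{lem:central-cross} gives the concluding sentence: $P$ is not affinely isomorphic to the cross polytope.
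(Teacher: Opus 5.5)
Your plan is correct and follows the paper's own proof: an exact rank computation for $\dim P=7$, an exhaustive exact facet enumeration showing that every facet hyperplane contains exactly $7$ points of $V$ (the paper reports $136$ facets), the observation that distinct $0/1$-points are always vertices, and exhibiting $0010110$, whose antipode is missing.

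There are two small slips. First, a ridge of a simplicial $7$-polytope is a $5$-simplex with $6$ vertices, so check (c) must be stated for $6$-subsets; a $4$-face lies in at least three facets. Second, the last step uses the easy converse of \Cref{lem:central-cross}, not the lemma itself. An affine image of the cross polytope is centrally symmetric about its vertex barycenter, which here is $\frac12\one$ by balance, so $V$ would have to be closed under $v\mapsto\one-v$; since $1101001\notin V$, $P$ cannot be a cross polytope.
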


\begin{proof}
Let $M_V$ be the matrix with rows consisting of $V$. All entries of $M_V$ are $0$ or $1$, and the $14$ rows are distinct. Computing the rank
\[
  \rank\begin{pmatrix} 1 & v_1\\ \vdots & \vdots\\ 1 & v_{14}\end{pmatrix}=8
\]
shows that $P$ is $7$-dimensional.  Exact facet enumeration gives $136$
supporting facets.  Each facet contains exactly $7$ vertices; since $\dim P=7$,
each facet is therefore a $6$-simplex.  Therefore $P$ is simplicial.  Every point of a
$0/1$-set is  a vertex of its convex hull,
so the $14$ distinct rows of $M_V$ are exactly the
vertices of $P$ and $f_0(P)=14$ follows.

It remains to observe that $V$ is not closed under the cube antipodal map. To this end observe that the four vertices
\[
  v_1=0010110,\qquad v_5=0111000,\qquad v_6=1100001,\qquad v_{10}=1001111
\]
have complements that are not in $V$.  Hence $P$ is not centrally symmetric.
\end{proof}

The facet enumeration and rank computations were carried out exactly over
\(\mathbb{Q}\); an independent \texttt{polymake} computation returns the same
dimension, number of vertices, facets, and simpliciality certificate.

\begin{figure}
  \centering
  \includegraphics[width=0.7\linewidth]{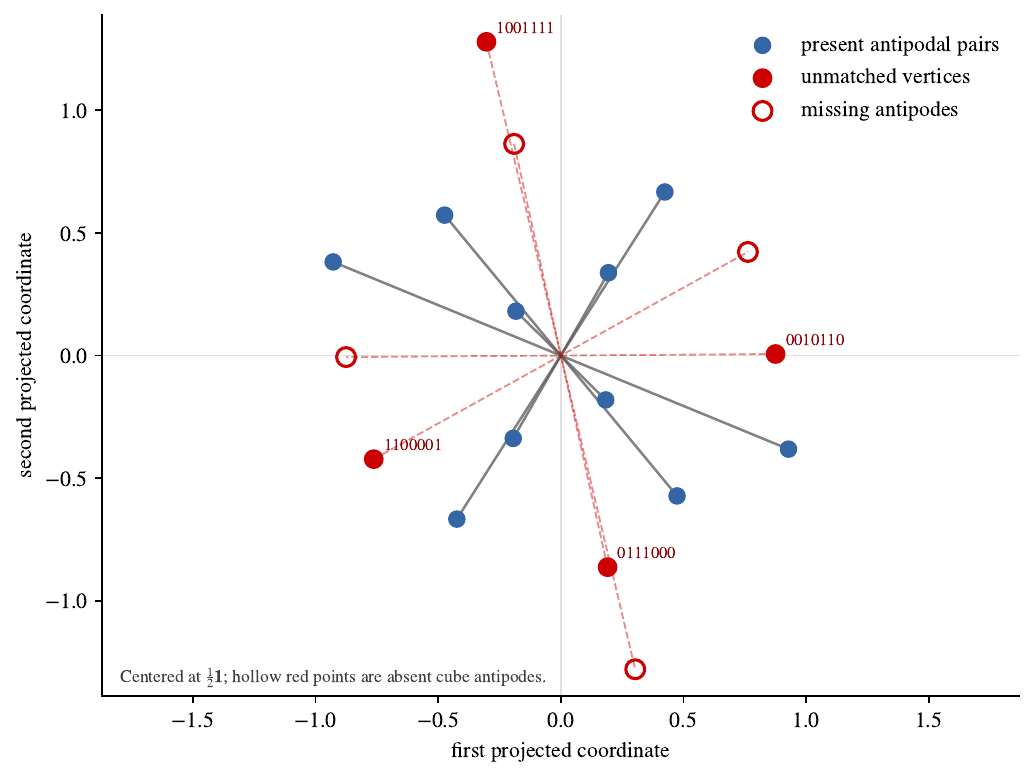}
  \caption{A two-dimensional projection, chosen to separate all points, of the centered vertex set $V-\frac12\one$.  Blue points form the five present cube-antipodal pairs.
  Red points are the four unmatched vertices; hollow red markers indicate where
  their missing cube antipodes would project. A single coordinate swap matches the remaining red vertices to obtain a cross polytope.}
  \label{fig:separating-projection}
\end{figure}

\begin{remark}[A single coordinate swap restores central symmetry]
\label{rem:single-swap}
Observe that a single coordinate swap turns our counterexample into a cross polytope.  Exchanging the fourth coordinates of $v_5=0111000$ and $v_6=1100001$ produces $v_5'=0110000$ and $v_6'=1101001$.  Now $v_5'=\one-v_{10}$ and $v_6'=\one-v_1$, so the four
previously unmatched vertices $v_1,v_5,v_6,v_{10}$ pair up and the resulting $14$
points are centrally symmetric about $\frac12\one$.  Their seven antipodal
directions $2v-\one$ are linearly independent, so the convex hull of the modified set is affinely isomorphic to 
the $7$-dimensional cross polytope (now with $2^7=128$ facets); see also \Cref{fig:separating-projection} for an illustration.
\end{remark}

Finally, we provide some combinatorial data for our counterexample.

\begin{remark}[Combinatorial data]
\label{rem:combinatorial-data}
The full $f$-vector and $h$-vector of the example above are
\[
  f(P)=(14,86,292,590,712,476,136),
  \qquad
  h(P)=(1,7,23,37,37,23,7,1).
\]
The five present cube-antipodal pairs are
\[
\begin{array}{lll}
\{1011101,0100010\}, & \{1000100,0111011\}, & \{1001010,0110101\},\\
\{0010001,1101110\}, & \{0001100,1110011\}. &
\end{array}
\]
Thus exactly five  antipodal pairs are present.  In agreement with
the discussion after \Cref{prop:ziegler-bound}, the graph has exactly five  non-edges and they are exactly these five pairs.  
\end{remark}

\section{The complete classification in dimension seven}
\label{sec:enumeration}

In \Cref{thm:main} we have presented one example that is not centrally symmetric. In this section we determine \emph{all} of them in dimension $7$ and as a by-product also confirm, for $d\le6$, the enumeration result that \citet{Ziegler1999} attributes to Aichholzer. 

\begin{definition}[Column-sum vector]
\label{def:colsum}
For a finite set $A\subseteq\bits^{n}$, regarded as the rows of a $0/1$-matrix
with $n$ columns, its \emph{column-sum vector} is $\sum_{a\in A}a$, the vector
in $\{0,1,\dots,|A|\}^{n}$ whose $j$th entry $\bigl|\{a\in A:a_j=1\}\bigr|$ is the
number of points of $A$ whose $j$th coordinate equals $1$.
\end{definition}

\paragraph{Reduction of search space.}
Let $P = \conv(V)$ be a simplicial 0/1-polytope with $|V|=2d$.  By
\Cref{prop:ziegler-bound} the vertex set is balanced and, for each coordinate,
the two coordinate faces are $(d-1)$-simplices.  Fixing the last coordinate
splits
\[
  V=(A\times\{0\})\;\cup\;(B\times\{1\}),\qquad A,B\subseteq\bits^{d-1},
\]
where $A$ and $B$ are the vertex sets of those two coordinate faces, respectively; each one is an affinely
independent set of $d$ points, i.e., the vertex set of a $(d-1)$-simplex.  Writing $B'=\one-B$ for
the complement of $B$ inside the smaller cube, balance is equivalent to $A$ and
$B'$ having the same column-sum vector, and $V$ is centrally symmetric if and
only if $A=B'$.  Thus the possible sets $V$ are encoded by pairs $(A,B')$ of
$0/1$-simplices in $\bits^{d-1}$ sharing a column-sum vector, and the
ones that are not centrally symmetric are exactly those examples with $A\neq B'$.

\paragraph{Symmetry reduction and certification.}
The cube symmetry group $B_{d-1}=(\mathbb Z_2)^{d-1}\rtimes S_{d-1}$ acts on the
first $d-1$ coordinates.  Grouping the simplices $A$ by their column-sum vectors and
retaining only a canonical representative of each orbit (with the entries of the column-sum vector folded into
$\{0,\dots,\lfloor d/2\rfloor\}$ and sorted, then reduced by the stabilizer of
the column-sum vector) realizes the full $B_{d-1}$ symmetry.  For $d=7$ this
shrinks the number of candidate pairs from $5.06\times10^{10}$ to
$4.26\times10^{8}$.  Each candidate is first screened by a cheap necessary test
(every coordinate face must be a $(d-1)$-simplex) and then certified by exact
integer facet enumeration; since all arithmetic is over the integers, the
classification is exact.  The examples that are not centrally symmetric are finally
deduplicated under the full cube group $B_d$.  The whole computation runs in a
few minutes on a single multi-core machine.  We validated the pipeline against a
direct enumeration of all $2d$-subsets for $d=4,5$, recovered the example of
\Cref{thm:main} for $d=7$, and re-verified every dimension-$7$ representative
independently in exact rational arithmetic and with \texttt{polymake}. For the centrally symmetric case, no separate search is needed, by the following folklore lemma:

\begin{lemma}
\label{lem:central-cross}
A centrally symmetric $d$-polytope with exactly $2d$ vertices is affinely
isomorphic to the $d$-dimensional cross polytope.
\end{lemma}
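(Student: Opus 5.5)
Translate $P$ so that its center of symmetry $c$ sits at the origin. By central symmetry, $x\mapsto 2c-x$ maps $P$ onto itself, hence permutes the vertex set. After translating, the vertex set $W$ is invariant under $x\mapsto -x$.

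\emph{Step 1: the vertices come in $d$ opposite pairs.} No vertex is fixed by the map, since $w=-w$ forces $w=0$. But $0$ is the center of a full-dimensional centrally symmetric polytope and therefore lies in its interior, so it cannot be a vertex. Hence the involution $w\mapsto -w$ has no fixed points on $W$. Its $2d$ elements therefore split into $d$ pairs $\{w_i,-w_i\}$ with $i=1,\dots,d$.

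\emph{Step 2: the pair directions form a basis.} Since $P=\conv\{\pm w_1,\dots,\pm w_d\}$ is $d$-dimensional, its affine hull is $\R^d$. Every point of $P$ lies in the linear span of $w_1,\dots,w_d$, so this span is all of $\R^d$. As there are exactly $d$ vectors, they are linearly independent.

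\emph{Step 3: the isomorphism.} Let $T$ be the linear map with $T(\unitvec_i)=w_i$; it is invertible by Step 2. Then $T$ maps $\{\pm\unitvec_1,\dots,\pm\unitvec_d\}$ onto $W$. Since linear maps commute with taking convex hulls, $T$ maps the cross polytope $\conv\{\pm\unitvec_i\}$ onto $P$. Composing with the initial translation gives the required affine isomorphism.

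The only point needing care is Step 1, namely that the center is not a vertex, i.e.\ that it lies in the interior of $P$. Here is a direct argument. The center equals the average of the vertex set, because the vertex set is invariant under the reflection; so it is a convex combination of all vertices with positive weights. A point with this property is not a vertex, since a vertex cannot be written as a convex combination with positive weight on the other vertices. This is all Step 1 requires. Interiority itself also follows, since a positive combination of all vertices of a full-dimensional polytope lies in no facet. No $0/1$-structure is used, which matches the statement's generality.
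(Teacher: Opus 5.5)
Your proposal is correct and follows the paper's proof step for step. You pair the vertices via the fixed-point-free central reflection, use full-dimensionality to conclude that $w_1,\dots,w_d$ form a basis, and take the linear map $\unitvec_i\mapsto w_i$. Your only addition is an explicit justification that the center is not a vertex (it is the barycenter of the vertex set, so a positive convex combination of all vertices). The paper simply asserts this inside the proof, and it gives the same barycenter reasoning in the discussion after \Cref{prop:ziegler-bound}.
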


\begin{proof}
Let $P$ be centrally symmetric about $c$ with $2d$ vertices.  The central
reflection $x\mapsto 2c-x$ fixes no vertex, since its only fixed point $c$ is
in the interior, so it pairs the vertices into $d$ pairs $\{c+u_i,\,c-u_i\}$.  Since
$P$ is $d$-dimensional, the vectors $\{\pm u_i\}$ span $\R^d$, so
$u_1,\dots,u_d$ are linearly independent.  The linear map $e_i\mapsto u_i$ is
therefore invertible and carries the standard cross polytope
$\conv\{\pm e_i\}$ onto $P-c$.
\end{proof}

In particular every centrally symmetric example with $|V| = 2d$ is a $0/1$-realization of the cross polytope. We obtain the following classification:

\begin{theorem}[Classification]
\label{thm:classification}
Up to the symmetry group $B_d$ of the cube, the simplicial $d$-dimensional
$0/1$-polytopes with $2d$ vertices are counted in \Cref{tab:classification}.
For $d\le6$ every such polytope is a cross polytope; in dimension $7$ exactly
five of them are not centrally symmetric.
\end{theorem}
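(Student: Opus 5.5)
This is a computer-assisted classification, so the proof consists of showing that the search described in \Cref{sec:enumeration} is exhaustive and that its certification step is exact. Both are then combined with \Cref{lem:central-cross}.

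\textbf{Completeness of the encoding.} I would first establish that every simplicial $d$-dimensional $0/1$-polytope with $2d$ vertices shows up as a candidate pair $(A,B')$. By \Cref{prop:ziegler-bound}, the coordinate faces $P\cap\{x_d=0\}$ and $P\cap\{x_d=1\}$ are $(d-1)$-simplices with $d$ vertices each. Hence $A$ and $B$ are affinely independent $d$-subsets of $\bits^{d-1}$, and so is $B'=\one-B$, because complementation is an affine bijection. Balance in each of the first $d-1$ coordinates says exactly that $A$ and $B'$ have the same column-sum vector. So the enumeration of all such pairs is a superset of the true examples.

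\textbf{Soundness of the symmetry reduction.} Next I would argue that choosing canonical representatives loses no $B_d$-orbit. The group $B_{d-1}$ acts simultaneously on $A$ and $B'$. It preserves affine independence, and it transforms column-sum vectors in a controlled way: a coordinate flip replaces entry $s_j$ by $d-s_j$, and permutations permute the entries. Therefore every pair can be moved so that its column-sum vector is folded into $\{0,\dots,\lfloor d/2\rfloor\}$ and sorted. The residual freedom is the stabilizer of that vector, and orbit representatives under this stabilizer suffice. The remaining symmetries of $B_d$ (flipping or permuting the last coordinate) only identify candidates further. This is handled by the final deduplication under $B_d$, which never discards an orbit.

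\textbf{Exact certification.} For each candidate I would apply the necessary screen that all $2d$ coordinate faces are simplices, which is justified by \Cref{prop:ziegler-bound}. Survivors are then checked by exact facet enumeration: $P$ is accepted precisely when $\dim P=d$ and every facet contains exactly $d$ vertices. Surviving non-symmetric sets ($A\ne B'$) are reduced to $B_d$-orbits. For symmetric survivors ($A=B'$), \Cref{lem:central-cross} gives that they are cross polytopes, so they need no separate treatment beyond counting them. The counts for $d\le 6$ (no non-symmetric survivors) and $d=7$ (five orbits) are read off from the output and recorded in \Cref{tab:classification}.

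\textbf{Main obstacle.} The hard part is not the mathematics but trusting the computation. The canonicalization must be correct, meaning it really realizes the stabilizer and neither over-prunes nor mislabels orbits. To address this, I would cross-validate by brute force over all $2d$-subsets for $d=4,5$, and independently recheck the five $d=7$ representatives, together with a pairwise $B_d$-inequivalence test, in rational arithmetic and with \texttt{polymake}.
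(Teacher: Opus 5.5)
Your proposal is correct and follows the paper's own argument. It uses the same split encoding into pairs $(A,B')$ of $0/1$-simplices in $\bits^{d-1}$ with equal column-sum vectors, canonicalization under $B_{d-1}$ by folding and sorting the column sums and then reducing by the stabilizer, the coordinate-face screen followed by exact facet enumeration, final deduplication under $B_d$, \Cref{lem:central-cross} for the case $A=B'$, and cross-validation by brute force for $d=4,5$ and with \texttt{polymake}. Your added justifications, namely that balance is exactly equality of column sums and that coordinate flips commute with complementation (so $B_{d-1}$ acts simultaneously on $(A,B')$), are correct and make explicit what the paper states only briefly.
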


\begin{table}[htbp]
  \centering
  \begin{tabular}{rrrr}
    \toprule
    $d$ & total & cross polytopes & not centrally symmetric \\
    \midrule
    $4$ & $3$    & $3$    & $0$ \\
    $5$ & $7$    & $7$    & $0$ \\
    $6$ & $63$   & $63$   & $0$ \\
    $7$ & $1631$ & $1626$ & $5$ \\
    \bottomrule
  \end{tabular}
  \caption{Simplicial $0/1$-polytopes with the maximal number $2d$ of vertices,
  up to the cube symmetry group $B_d$.  By \Cref{lem:central-cross} the
  ``cross polytopes'' column counts the $0/1$-embeddings of the cross polytope;
  the last column counts the examples that are not centrally symmetric.  Dimension $7$ is the first one 
  with such an example.}
  \label{tab:classification}
\end{table}

The five dimension-$7$ examples that are not centrally symmetric fall into two combinatorial types, separated already by their face numbers (\Cref{tab:d7-types}).  All five are balanced with five cube-antipodal pairs and four unmatched vertices,
exactly as in \Cref{rem:combinatorial-data}.  Four of them share the $f$- and
$h$-vector of the example in \Cref{thm:main}; they are pairwise not equivalent
under the cube group but combinatorially isomorphic, so they form a
single combinatorial type realized as four distinct $0/1$-embeddings, verified
with \texttt{polymake}.  They are not all \emph{affinely} isomorphic, however:
as point configurations the four embeddings realize three distinct
affine-isomorphism types (two of them coincide affinely, the other two are
affinely distinct from these and from each other).  The fifth example is different: it has $144$
facets and the entry $h_3=41$ is strictly larger, hence $g_3=h_3-h_2=18$ instead
of $14$.  An explicit representative of this $144$-facet type is
\[
\begin{aligned}
V^\ast=\{\,&0000000,\ 1100000,\ 1010000,\ 0001100,\ 1111100,\ 1001010,\ 0110110,\\
          &1001001,\ 0110101,\ 1110011,\ 0111011,\ 1000111,\ 0101111,\ 0011111\,\}.
\end{aligned}
\]
Explicit generating vertex sets for all five orbits as well as additional information are to be found in
\Cref{tab:d7-examples} in the appendix.

\begin{table}[htbp]
  \centering
  \small
  \begin{tabular}{clcll}
    \toprule
    type & \multicolumn{1}{c}{orbits} & facets &
      \multicolumn{1}{c}{$f$-vector} & \multicolumn{1}{c}{$h$-vector} \\
    \midrule
    I  & $1$ & $144$ & $(14,86,296,610,748,504,144)$ & $(1,7,23,41,41,23,7,1)$ \\
    II & $4$ & $136$ & $(14,86,292,590,712,476,136)$ & $(1,7,23,37,37,23,7,1)$ \\
    \bottomrule
  \end{tabular}
  \caption{The two combinatorial types of not centrally symmetric simplicial $7$-dimensional
  $0/1$-polytope with $14$ vertices.  Type~II contains the example of
  \Cref{thm:main}; both types have exactly five cube-antipodal pairs, i.e. non-edges.}
  \label{tab:d7-types}
\end{table}

\section{Concluding questions}
\label{sec:questions}

The classification settles the existence and number of not centrally symmetric simplicial polytopes with $|V| = 2d$ for dimension $d = 7$. Several questions remain.

\begin{enumerate}[leftmargin=*]
  \item How does the number of not centrally symmetric examples grow for $d\ge8$?  
  \item All five  examples with $d=7$ have $d-2$  cube-antipodal pairs. Which numbers of cube-antipodal pairs occur in for $d\ge 8$? Are there simplicial 0/1-polytopes without any cube-antipodal pairs, i.e., are there any $2$-neighborly simplicial 0/1-polytopes?
  \item Is there a structural characterization (not relying on enumeration) of the split
  pairs $(A,B')$ that yield a simplicial polytope?
\end{enumerate}

\section*{Acknowledgments}

The research reported in this paper was initiated at a live presentation of an internal research version of the \href{https://www.pokutta.com/blog/agentic-researcher/}{Agentic Researcher Framework} (an LLM-based agentic research framework; see \citet{ZPRP2026AgenticResearcherAI4Research}) in Magdeburg, Germany in June, where the conjecture of Ziegler was used as an example. After an initially wrong affirmative conclusion by the AI agent via a flawed proof, a subsequent Lean 4 verification attempt by the agent revealed a gap in the argument\footnote{In the flawed proof, one of the inequalities in the $g$-theorem (\citet{McMullen1971,Stanley1975,BilleraLee1980}, see also \citet[Thm. 8.5]{Ziegler2007}) was modified resulting, for $d=7$, in the bound $g_2=h_2-h_1\le 15$, where the example that subsequently was derived has $g_2=16$.}.
Over multiple iterations the agent further reduced the gaps to a combinatorial condition on the cube-antipodal pair structure of the equality case (the split-model reduction underlying \Cref{sec:enumeration}), which gave the insight that led to the counterexample. The correctness of the counterexample has been verified via \texttt{polymake} and separate independent implementation in exact rational arithmetic. In a following iteration, the complete classification for $d=7$ has been derived, via enumeration exploiting symmetry and balancedness. The LLMs used in the agentic harness were a locally-deployed DeepSeek V4 Flash together with GLM 5.2 with a success/failure-based feedback loop via reflection prompts, similar to GEPA \citep{agrawal2026gepareflectivepromptevolution}. We report the agentic research milestones in \Cref{app:milestones}. 

Research reported in this paper was partially supported by project \emph{EF-LI-Opt-3: Agentic AI in Mathematics} of the Berlin Mathematics Research
Center MATH$^+$ (EXC-2046/2, project ID 390685689), funded by the Deutsche
Forschungsgemeinschaft (DFG, German Research Foundation) under Germany's
Excellence Strategy.

\bibliographystyle{plainnat}
\bibliography{refs}

\clearpage 

\appendix
\section{The five not centrally symmetric examples in dimension seven}
\label{app:examples}

\Cref{tab:d7-examples} gives explicit generating vertex sets for the five
simplicial, not centrally symmetric $7$-dimensional $0/1$-polytopes with $14$ vertices of \Cref{thm:classification}, one representative per orbit under the cube symmetry
group $B_7$.

\begin{table}[htbp]
\centering\small
\setlength{\tabcolsep}{8pt}
\begin{tabular}{>{\ttfamily}c >{\ttfamily}c >{\ttfamily}c >{\ttfamily}c >{\ttfamily}c}
\toprule
\multicolumn{1}{c}{$V_1$} & \multicolumn{1}{c}{$V_2$} &
\multicolumn{1}{c}{$V_3$} & \multicolumn{1}{c}{$V_4^{\dagger}$} &
\multicolumn{1}{c}{$V_5$} \\
\midrule
0000000 & 0000000 & 0000000 & 0000000 & 0000000 \\
1100000 & 1100000 & 1100000 & 1100000 & 1100000 \\
1010000 & 1010000 & 1010000 & 1010000 & 0011000 \\
0001100 & 0001100 & 0001100 & 0001100 & 1010100 \\
1111100 & 1001010 & 1001010 & 1001010 & 0101100 \\
1001010 & 0111010 & 0111010 & 0111010 & 0110010 \\
0110110 & 1000110 & 0100110 & 0110110 & 1001110 \\
1001001 & 0111001 & 1011001 & 1001001 & 0110001 \\
0110101 & 1000101 & 1000101 & 0110101 & 1001101 \\
1110011 & 1111101 & 0111101 & 1111101 & 1010011 \\
0111011 & 1110011 & 1110011 & 1110011 & 1101011 \\
1000111 & 0110111 & 0010111 & 1000111 & 0100111 \\
0101111 & 0101111 & 0101111 & 0101111 & 0011111 \\
0011111 & 0011111 & 1111111 & 0011111 & 1111111 \\
\bottomrule
\end{tabular}
\caption{Generating vertex sets of the five not centrally symmetric, simplicial
$7$-dimensional $0/1$-polytopes with $2d=14$ vertices (\Cref{thm:classification}),
one per cube-symmetry orbit.  Each column lists the $14$ vertices of one
representative as bit strings, with coordinate $j$ the $j$th digit.  $V_1$ is
the $144$-facet type (the example $V^\ast$ of \Cref{sec:enumeration}); the
remaining $V_2,\dots,V_5$ are the $136$-facet type, and $V_4^{\dagger}$ is the
orbit of the counterexample in \Cref{thm:main}.  All five are balanced with
five cube-antipodal pairs.  Although $V_2,\dots,V_5$ are combinatorially
isomorphic, as point configurations they realize three affine-isomorphism
types: $V_2$ and $V_3$ are affinely isomorphic, whereas $V_4^{\dagger}$ and
$V_5$ are affinely distinct, both from each other and from $V_2,V_3$.}
\label{tab:d7-examples}
\end{table}

The single coordinate swap of \Cref{rem:single-swap}, exchanging one coordinate
between two vertices, again distinguishes the two types.  For each $136$-facet
configuration a single such swap already restores central symmetry, and in fact
exactly four distinct single swaps do, each producing the $7$-cross polytope by
\Cref{lem:central-cross}.  The $144$-facet configuration $V_1$ admits no single
swap of this kind.  \Cref{tab:d7-swaps} lists the minimum number of swaps needed
for each configuration.

\begin{table}[htbp]
  \centering
  \begin{tabular}{ccrr}
    \toprule
    configuration & type & facets & min.\ swaps to central symmetry \\
    \midrule
    $V_1$           & I  & $144$ & $2$ \\
    $V_2$           & II & $136$ & $1$ \\
    $V_3$           & II & $136$ & $1$ \\
    $V_4^{\dagger}$ & II & $136$ & $1$ \\
    $V_5$           & II & $136$ & $1$ \\
    \bottomrule
  \end{tabular}
  \caption{The minimum number of coordinate swaps (each exchanging one
  coordinate between two vertices) needed to make each configuration centrally
  symmetric.  For every $136$-facet configuration a single swap suffices, and
  exactly four distinct single swaps do, each producing the $7$-cross polytope;
  the $144$-facet configuration $V_1$ admits no single swap and needs two.}
  \label{tab:d7-swaps}
\end{table}

\begin{remark}
\label{rem:type-I-two-swaps}
For the $144$-facet configuration $V_1$ a single swap never suffices, but two
do: the minimum number of coordinate swaps that make $V_1$ centrally symmetric
is exactly two, and two suitable swaps produce the $7$-cross polytope.  In this
sense $V_1$ is strictly farther from central symmetry than the four $136$-facet
types, consistent with its larger $h$-vector.
\end{remark}

\section{Agentic research milestones}
\label{app:milestones}

\Cref{fig:milestones} places the principal milestones of the agentic research
process on a wall-clock timeline; each bar spans the
work turn in which the corresponding milestone was reached as well as the lead-up time.

\begin{figure}[htbp]
  \centering
  \includegraphics[width=\linewidth]{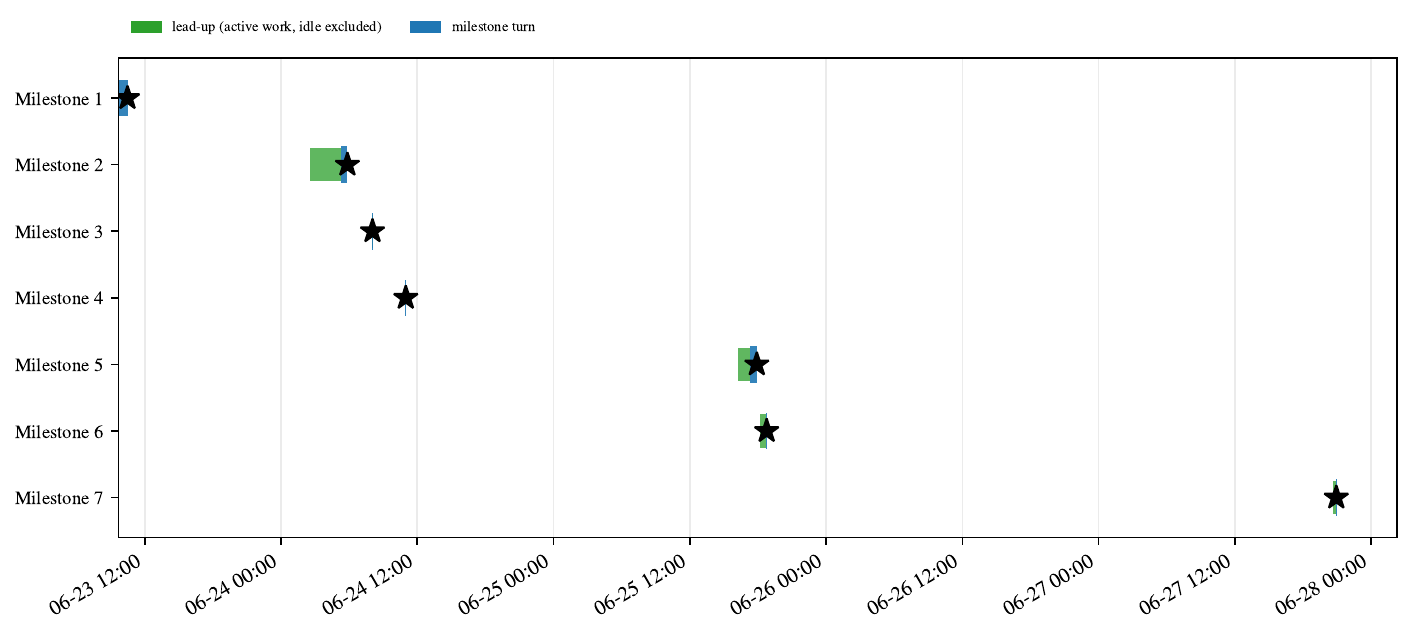}
  \caption{Milestones over time. Each row shows, in green, the actual work
  leading up to the milestone (the intermediate steps, with idle gaps excluded),
  followed by the milestone's own work turn in colour, star-capped at completion;
  durations below are reported as (lead-up\,$+$\,turn) in minutes.
  \textbf{Milestone~1} ($3+45$): a flawed argument ``established'' the
  conjecture as true via a Macaulay/$g$-theorem bound, later found invalid.
  \textbf{Milestone~2} ($161+36$): the counterexample, a not centrally symmetric
  simplicial $7$-dimensional $0/1$-polytope with $14$ vertices, was found.
  \textbf{Milestone~3} ($3+1$): it was verified exactly in rational arithmetic.
  \textbf{Milestone~4} ($0+4$): it was independently verified with
  \texttt{polymake}. \textbf{Milestone~5} ($64+35$): the complete dimension-$7$
  classification (exactly five not centrally symmetric examples) was established
  by exact exhaustive enumeration. \textbf{Milestone~6} ($31+5$): a single
  coordinate swap was shown to turn each $136$-facet example into the cross
  polytope. \textbf{Milestone~7} ($15+2$): the five examples were shown to form
  exactly two combinatorial types, using \texttt{polymake}.}
  \label{fig:milestones}
\end{figure}

\end{document}